\newcommand{\spp}{\kappa}
\newcommand{\Ann}{\mathop{\mathrm{Ann}}\nolimits}
\newcommand{\Supp}{\mathop{\mathrm{Supp}}\nolimits}
\newcommand{\Spec}{\mathop{\mathrm{Spec}}\nolimits}
\newcommand{\depth}{\mathop{\mathrm{depth}}\nolimits}
\newcommand{\8}{{\infty}}
\newcommand{\pd}{\operatorname{pd}}
\newcommand{\Min}{\operatorname{Min}}
\newcommand{\rk}{\operatorname{rank}}
\newcommand{\Tor}{\mathop{\mathrm{Tor}}\nolimits}
\newcommand{\Ext}{\mathop{\mathrm{Ext}}\nolimits}
\newcommand{\length}{\mathrm{\ell}}
\renewcommand{\i}[1]{\mathfrak{#1}}
\newtheorem{thm}{Theorem}[section]
\newtheorem{cor}[thm]{Corollary}
\newtheorem{lemma}[thm]{Lemma}
\theoremstyle{definition}
\newtheorem{defn}[thm]{Definition}
\theoremstyle{remark}
\newtheorem{rmk}[thm]{Remark}
 \numberwithin{equation}{thm}
\begin{document}
\title{Frobenius criteria of freeness and Gorensteinness}
\author{Jinjia Li}
\address{Department of Mathematics, University of Louisville, Louisville, Kentucky 40292}
\email{jinjia.li@louisville.edu}
\subjclass{Primary: 13A35; Secondary: 13D07, 13H10.}

\date{\today}

\keywords{finite projective dimension, Frobenius, Ext, Tor, canonical module, maximal Cohen-Macaulay module, Gorenstein ring, rigidity, intersection multiplicity.}

\begin{abstract}Let $F^n(-)$ be the Frobenius functor of Peskine and Szpiro. In this note, we show that the maximal Cohen-Macaulayness of $F^n(M)$ forces $M$ to be free, provided $M$ has a rank. We apply this result to obtain several Frobenius related criteria for the Gorensteinness of a local ring $R$, one of which improves a previous characterization due to Hanes and Huneke. We also establish a special class of finite length modules over Cohen-Macaulay rings, which are rigid against Frobenius.  

\end{abstract}

\maketitle

\section{Introduction and notation}

We assume throughout that $R$ is a commutative Noetherian local
ring in prime characteristic $p>0$. All modules are assumed to be
finitely generated unless otherwise specified. The Frobenius
endomorphism $f\colon R \to R$ is defined by $f(r)=r^p$ for $r \in
R$. Each iteration $f^n$ defines a new $R$-module structure on
$R$, denoted by ${}^{f^n}\!\! R$, via restriction of scalars.

Next we recall the Frobenius functor introduced by Peskine and Szpiro, see \cite{PS}.
For any $R$-module $M$, $F_R^n(M)$ denotes the base change $M\otimes_R {}^{f^n}\!\! R$ along $f^n$; note that the module structure is via usual multiplication in $R$ on the right hand factor of the tensor product. However, it is easy to see that the depth and dimension are unchanged if one were to view it as an $R$-module via the left factor instead. Its derived functors $\Tor^R_i (M,{}^{f^n}\!\! R)$ are similarly viewed as $R$-modules via the target of the base change map $f^n$. We omit the subscript $R$ if there is no ambiguity about $R$.
For convenience, we frequently use $q$ to denote the power $p^n$,
which may vary. Note that $F^n(R) \cong R$. When $M$ is a cyclic module $R/I$, it is easy to
show that $F^n(R/I) \cong R/I^{[q]}$, where $I^{[q]}$ denotes the
ideal generated by the $q$-th powers of the generators of $I$. We use the notation $\bold x$ for a sequence of elements of $R$ and often write simply $R/\bold x$ for $R/(\bold x)$ to save space. Likewise, $\bold x^q$ denotes the ideal generated by the $q$th powers of the sequence $\bold x$. By codimension of $M$,  we mean $\dim R-\dim M$. 

In this note, we study the question about when the vanishing of the module $\Tor^R_i (M,{}^{f^n}\!\! R)$ detects the finiteness of the projective dimension of $M$ (we call such a module \textit{rigid against Frobenius}, see Definition~\ref{rigid}).  In \cite{AM}, Avramov and Miller showed that over a complete intersection ring, every module is rigid against Frobenius. The first example of a module which is not rigid against Frobenius  over a Gorenstein ring was obtained by Dao, Miller and the author in \cite{DLM}. However, the question of how to detect such a module in general is still quite open.  In particular, it is not known whether a finite length module over a Gorenstein ring is rigid against Frobenius. If this were true, then the main result in \cite{KV} could be extended from the complete intersection case to the Gorenstein case. In this note, we show a special case of this is true. Specifically, if a module takes the form $M/\bold x M$ where $M$ is a Cohen-Macaulay module of codimension 0 or 1,  and $\bold x$ is a maximal regular sequence of both $M$ and $R$, then it is rigid against Frobenius. Related to the establishment of the above result, we also provides several characterizations of Gorensteinness of the ring, concerning the vanishing of Ext or Tor involving Frobenius. Some of them improve similar characterizations obtained previously by Hanes and Huneke \cite{HH} and by Goto \cite{G}. 

For the proofs of the results in the next section, the properties of the higher
Euler characteristics of Koszul complexes are used in an essential
way. We recall some terms and results here. In the sequel, we use $\length(M)$, $\pd M$, and $\mu(M)$, respectively, to denote the
\emph{length}, \emph{projective dimension}, and \emph{minimal number of generators}, respectively,  of the module
$M$. 

Recall that the $i$th higher Euler characteristics for a pair of modules $M$,
$N$ such that $\length(M\otimes N)<\8$ and $\pd N <\8$ is defined
by the following formula
\[\chi_i(M,N)=
\sum_{j=i}^{\pd N}(-1)^{j-i}\length(\Tor_j^R(M,N)).
\]
By convention, $\chi(M,N)=\chi_0(M,N)$. Some standard facts about
$\chi$ and $\chi_i$ can be found in \cite{L, S}. In this paper, we
particularly need the following well-known results  due to Serre and
Lichtenbaum  \cite[Lemma 1 and Theorem 1]{L}, \cite[Chap. IV.: A. and Appendix II]{S}.

\begin{lemma}\label{Li1} Let $M$ be an $R$-module and $\bold x=\{x_1,x_2,...,x_c\}$
an $R$-sequence such that $\length(M/\bold xM)<\8$. Then $\chi(M,
R/\bold x)\geq 0$, with the equality holding iff $\dim M < c$.
\end{lemma}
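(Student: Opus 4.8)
The plan is to pass to Koszul homology, where $\chi(M,R/\mathbf{x})$ becomes an honest Euler characteristic, and then to induct on $c$. Because $\mathbf{x}$ is an $R$-sequence, the Koszul complex $K_\bullet(\mathbf{x};R)$ is a length-$c$ free resolution of $R/\mathbf{x}$, so $\Tor^R_j(M,R/\mathbf{x})\cong H_j(\mathbf{x};M)$ for every $j$, $\pd(R/\mathbf{x})=c$, and
\[
\chi(M,R/\mathbf{x})=\sum_{j=0}^{c}(-1)^{j}\,\length\bigl(H_j(\mathbf{x};M)\bigr).
\]
Each term is finite, since $H_j(\mathbf{x};M)$ is a subquotient of a finite direct sum of copies of $M$ annihilated by $(\mathbf{x})$, hence supported in $\Supp(M/\mathbf{x}M)=\{\mathfrak{m}\}$. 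Tensoring a short exact sequence whose three terms all have finite colength modulo $\mathbf{x}$ with the free complex $K_\bullet(\mathbf{x};R)$ and comparing alternating sums of lengths in the resulting long exact homology sequence shows that $\chi(-,R/\mathbf{x})$ is additive on such sequences. Note also that $\dim M\le c$ is automatic, as $(\mathbf{x})$ generates an $\mathfrak{m}$-primary ideal of $R/\Ann M$.

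For $c=0$ the assertion is just $\length(M)\ge 0$, with equality iff $M=0$. For $c\ge 1$, fix a prime filtration $0=M_t\subset\cdots\subset M_0=M$ with $M_i/M_{i+1}\cong R/\mathfrak{p}_i$; by additivity $\chi(M,R/\mathbf{x})=\sum_i\chi(R/\mathfrak{p}_i,R/\mathbf{x})$, and each $R/\mathfrak{p}_i$, being a subquotient of $M$, again has finite colength modulo $\mathbf{x}$, so $\dim R/\mathfrak{p}_i\le c$. If $\mathfrak{p}_i=\mathfrak{m}$, the differentials of $K_\bullet(\mathbf{x};R/\mathfrak{m})$ vanish and $\chi(R/\mathfrak{m},R/\mathbf{x})=\sum_j(-1)^j\binom{c}{j}=0$. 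If $\mathfrak{p}_i\neq\mathfrak{m}$, then $(\mathbf{x})\not\subseteq\mathfrak{p}_i$ (otherwise $R/\mathfrak{p}_i$ itself would have finite length), so after a permutation of the $x_k$ --- which, $R$ being local, leaves $\mathbf{x}$, hence its initial segment $\mathbf{x}'=x_1,\dots,x_{c-1}$, an $R$-sequence --- we may assume $x_c\notin\mathfrak{p}_i$, and then $x_c$ is a nonzerodivisor on the domain $A=R/\mathfrak{p}_i$. The standard relation between Koszul homology and nonzerodivisors gives $H_j(\mathbf{x};A)\cong H_j(\mathbf{x}';A/x_cA)=\Tor^R_j(A/x_cA,R/\mathbf{x}')$, so $\chi(R/\mathfrak{p}_i,R/\mathbf{x})=\chi(A/x_cA,R/\mathbf{x}')$. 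Since $\dim(A/x_cA)=\dim R/\mathfrak{p}_i-1$, the induction hypothesis applied to $A/x_cA$ and the $(c-1)$-element $R$-sequence $\mathbf{x}'$ yields $\chi(R/\mathfrak{p}_i,R/\mathbf{x})\ge 0$, with equality exactly when $\dim R/\mathfrak{p}_i<c$.

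Summing over $i$ gives $\chi(M,R/\mathbf{x})\ge 0$ at once, and $\chi(M,R/\mathbf{x})=0$ iff every summand vanishes, i.e. iff $\dim R/\mathfrak{p}_i<c$ for every $i$ with $\mathfrak{p}_i\neq\mathfrak{m}$. Since $\dim M=\max_i\dim R/\mathfrak{p}_i$ and $c\ge 1$, this is equivalent to $\dim M<c$, completing the argument.

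The half I expect to carry the real weight is positivity --- the fact that $\dim M=c$ forces $\chi(M,R/\mathbf{x})>0$; in the scheme above this is precisely where the edge identity $\chi(R/\mathfrak{m},R/\mathbf{x})=0$ and the nonzerodivisor reduction together extract a strictly positive contribution from each top-dimensional prime of $M$. Conceptually this is Serre's theorem that $\chi(M,R/\mathbf{x})$ equals the Hilbert--Samuel multiplicity $e(\mathbf{x};M)$ when $\mathbf{x}$ is a system of parameters for $M$, although here one needs only $e(\mathbf{x};M)>0$. What remains is bookkeeping: Koszul homology must be symmetric in the $x_k$ and compatible with localization, permutations of a regular sequence over a local ring remain regular sequences, and each module entering the induction (such as $A/x_cA$) must retain finite colength modulo the relevant subsequence so that the Euler characteristics in play are genuinely defined.
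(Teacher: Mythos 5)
The paper does not prove this lemma; it quotes it as a known result of Serre and Lichtenbaum, citing \cite{L} and \cite{S}. Your argument is correct and is essentially the classical one from those sources: dévissage along a prime filtration using the additivity of $\chi(-,R/\mathbf{x})$, the edge computation $\chi(R/\mathfrak{m},R/\mathbf{x})=\sum_j(-1)^j\binom{c}{j}=0$ for $c\ge 1$, and, for $\mathfrak{p}\neq\mathfrak{m}$, the choice of some $x_k\notin\mathfrak{p}$ together with the nonzerodivisor isomorphism $H_j(\mathbf{x};R/\mathfrak{p})\cong H_j(\mathbf{x}';R/(\mathfrak{p}+x_kR))$ to drop $c$ by one; all the supporting details (finiteness of the Koszul homology lengths, persistence of finite colength for the cyclic subquotients, symmetry of Koszul homology under permutation and that permutations of a regular sequence in a local ring remain regular) are handled correctly.
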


\begin{thm} \label{Li2} Let $M$ be an $R$-module and $\bold x$
an $R$-sequence such that $\length(M/\bold xM)<\8$. Then for any
$i>0$, $\chi_i(M, R/\bold x) \geq 0$, with the equality holding
iff $\Tor_i(M,R/\bold x)=0$ {\rm{(}}hence $\Tor_j(M,R/\bold x)=0$ for all
$j\geq i$ by the $\Tor$ rigidity of $R/\bold x${\rm{)}}.
\end{thm}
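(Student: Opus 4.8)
The plan is to prove both assertions together by induction on $c$, the length of $\mathbf{x}$. Observe first that for $j\ge 1$ the Koszul homology $\Tor_j^R(M,R/\mathbf{x})\cong\hh_j(\mathbf{x};M)$ is supported in $\Supp(M/\mathbf{x}M)=\{\mathfrak{m}\}$, hence has finite length, so $\chi_i(M,R/\mathbf{x})$ is a finite alternating sum of lengths (it vanishes past $j=c=\pd(R/\mathbf{x})$), and the parenthetical ``hence'' clause is precisely the $\Tor$-rigidity of the Koszul complex---itself provable along the same induction from the exact sequences $0\to\hh_j(\mathbf{y};M)/x_c\hh_j(\mathbf{y};M)\to\hh_j(\mathbf{x};M)\to(0:_{\hh_{j-1}(\mathbf{y};M)}x_c)\to 0$, $\mathbf{y}=x_1,\dots,x_{c-1}$. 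I would then reduce to the index $i=1$ by dimension shifting: a free cover $0\to\Syz^1M\to F\to M\to 0$ gives $\Tor_i^R(M,R/\mathbf{x})\cong\Tor_1^R(\Syz^{i-1}M,R/\mathbf{x})$ and $\chi_i(M,R/\mathbf{x})=\chi_1(\Syz^{i-1}M,R/\mathbf{x})$ for all $i\ge 1$. Although $\Syz^{i-1}M$ need not have finite length modulo $\mathbf{x}$, its modules $\hh_j(\mathbf{x};\Syz^{i-1}M)$ with $j\ge 1$ still do---a property inherited by all further syzygies---so it is enough to prove: \emph{for every $R$-module $L$ all of whose Koszul homologies $\hh_j(\mathbf{x};L)$ $(j\ge 1)$ have finite length, the finite sum $\chi_1(L,R/\mathbf{x})=\sum_{j\ge 1}(-1)^{j-1}\length(\hh_j(\mathbf{x};L))$ is $\ge 0$, with equality exactly when $\hh_1(\mathbf{x};L)=0$.}

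The base case $c=1$ is immediate, since $\hh_1(x;L)=(0:_Lx)$ and $\hh_j(x;L)=0$ for $j\ge 2$, so $\chi_1(L,R/x)=\length(0:_Lx)$. For $c\ge 2$ with $\depth L\ge 1$: no $\mathfrak{p}\in\Ass L$ contains $(\mathbf{x})$, because localizing at such a $\mathfrak{p}$ would kill $\hh_j(\mathbf{x};L_{\mathfrak{p}})$ for $j\ge 1$, forcing $\mathbf{x}$ to be an $L_{\mathfrak{p}}$-regular sequence of positive length and contradicting $\mathfrak{p}R_{\mathfrak{p}}\in\Ass L_{\mathfrak{p}}$; and $(\mathbf{x})$ avoids $\Ass R$ as it contains a nonzerodivisor. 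A prime avoidance argument now furnishes $x'\in(\mathbf{x})$, part of a (necessarily minimal) $c$-element generating set of $(\mathbf{x})$, that is a nonzerodivisor on both $R$ and $L$; since $\grade((\mathbf{x}))=c$, any $c$-element generating set of $(\mathbf{x})$ is again an $R$-sequence. The change-of-rings isomorphism $\Tor_j^R(L,R/\mathbf{x})\cong\Tor_j^{R/x'}\!\bigl(L/x'L,(R/x')/\bar{\mathbf{x}}\bigr)$ then transports the problem verbatim to the $(c-1)$-element $R/x'$-sequence $\bar{\mathbf{x}}$, and the inductive hypothesis finishes this case.

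The delicate case is $\depth L=0$, where $(\mathbf{x})$ contains no nonzerodivisor on $L$ at all. Here $\hh_c(\mathbf{x};L)=(0:_L\mathbf{x})\supseteq(0:_L\mathfrak{m})\ne 0$, so by rigidity $\hh_i(\mathbf{x};L)\ne 0$ for every $i\le c$, and the statement reduces to showing $\chi_1(L,R/\mathbf{x})>0$. Put $L_0=\Gamma_{\mathfrak{m}}(L)\ne 0$, of finite length, and $\bar L=L/L_0$, which has $\depth\ge 1$ and so is covered by the previous case. Comparing Euler characteristics along $0\to L_0\to L\to\bar L\to 0$, and using Lemma~\ref{Li1}---which gives $\chi(L_0,R/\mathbf{x})=0$ since $\dim L_0<c$, hence $\chi_1(L_0,R/\mathbf{x})=\length(L_0/\mathbf{x}L_0)$---yields $\chi_1(L,R/\mathbf{x})=\length\bigl(\im(L_0/\mathbf{x}L_0\to L/\mathbf{x}L)\bigr)+\chi_1(\bar L,R/\mathbf{x})$, a sum of two nonnegative terms. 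If the second is positive, done; if it is zero, then $\hh_1(\mathbf{x};\bar L)=0$ by the previous case, so the connecting homomorphism in the long exact Koszul sequence of $0\to L_0\to L\to\bar L\to 0$ vanishes, $L_0/\mathbf{x}L_0$ injects into $L/\mathbf{x}L$, and the first term equals $\length(L_0/\mathbf{x}L_0)>0$.

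The step I expect to be the real obstacle is this last one: partial Euler characteristics are only subadditive along short exact sequences, with a defect one must track, and extracting the strict positivity in the non-Cohen--Macaulay case is exactly what makes the equality criterion (and not merely $\chi_i\ge 0$) work. A shorter but less self-contained route for the index $1$ is to invoke Serre's identity $\chi(M,R/\mathbf{x})=e\bigl((\mathbf{x});M\bigr)$: then $\chi_1(M,R/\mathbf{x})\ge 0$ becomes the elementary inequality $e((\mathbf{x});M)\le\length(M/\mathbf{x}M)$, proved by counting generators of $(\mathbf{x})^nM/(\mathbf{x})^{n+1}M$, and equality holds precisely when $\mathrm{gr}_{(\mathbf{x})}M$ is free over $M/\mathbf{x}M$, i.e.\ when $\mathbf{x}$ is an $M$-sequence, i.e.\ (by rigidity) when $\Tor_1^R(M,R/\mathbf{x})=0$; but then everything rests on establishing $\chi=e$, which is the core of the intersection-multiplicity machinery.
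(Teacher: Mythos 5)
The paper does not prove Theorem~\ref{Li2} at all: it is quoted as a known result and cited to Lichtenbaum \cite{L} and Serre \cite{S}, so there is no in-paper argument to compare against. What you have written is an independent proof, and it follows the classical Lichtenbaum line: induct on the length $c$ of the regular sequence, reduce to $i=1$ by syzygies (noting correctly that $\hh_j(\mathbf{x};\Syz^1 M)\cong\hh_{j+1}(\mathbf{x};M)$ for $j\ge 1$, so the needed finiteness is preserved even though $\Syz^{i-1}M$ need not have finite colength), and split into $\depth L\ge 1$ (slice along a well-chosen minimal generator $x'$ of $(\mathbf{x})$ regular on $R$ and $L$, then change rings to $R/x'$) and $\depth L=0$ (truncate by $L_0=\Gamma_{\mathfrak m}(L)$, use Lemma~\ref{Li1} to get $\chi(L_0,R/\mathbf{x})=0$, and track the defect of additivity of $\chi_1$ along $0\to L_0\to L\to L/L_0\to 0$). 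Your bookkeeping is right: the identity $\chi_1(L)=\chi_1(L_0)+\chi_1(\bar L)-\length\bigl(\ker(L_0/\mathbf{x}L_0\to L/\mathbf{x}L)\bigr)$ does follow from additivity of $\chi_0$ together with the long exact sequence, and since $\chi_1(L_0)=\length(L_0/\mathbf{x}L_0)$ the two summands you obtain are nonnegative.

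Two points deserve attention. First, the prime-avoidance step requires picking a \emph{minimal} generator $x'$ of $(\mathbf{x})$ avoiding the finitely many primes in $\Ass R\cup\Ass L$; this is a ``prime avoidance in a vector space'' argument that can fail when the residue field is finite. The standard fix is to first pass to the faithfully flat local extension $R\to R[t]_{\mathfrak m R[t]}$, which has infinite residue field and preserves both the length computations and the vanishing of the relevant Tor modules; you should state this reduction. Second, you lean on Tor-rigidity of $R/\mathbf{x}$ at two places (to conclude $\hh_1(\mathbf{x};L)\ne 0$ from $\hh_c(\mathbf{x};L)\ne 0$ in the depth-$0$ case, and to get $\hh_j=0$ for all $j\ge 1$ from $\hh_1=0$). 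You flag this and sketch the inductive bootstrap; that is correct, but it is a separate classical input worth stating as its own lemma rather than handling in a parenthetical, since without it the ``equality iff $\Tor_1=0$'' clause in the depth-$0$ case is not actually closed. With those two additions the argument is complete. The alternative you mention via Serre's identity $\chi(M,R/\mathbf{x})=e\bigl((\mathbf{x});M\bigr)$ is indeed Serre's original route and gives the $i=1$ case more quickly at the cost of invoking the full multiplicity machinery.
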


\section{Main results}
\begin{defn}\label{rigid} We say a module $M$ is \textit{rigid against Frobenius} if either $M$ has finite projective dimension or $\Tor_i(M,  {}^{f^n}\!\! R) \neq 0$ for all $i$, and all $n>>0$.
\end{defn}

Avramov and Miller \cite{AM} (see also \cite{D}) showed that over a complete intersection ring, every module is rigid against Frobenius. Such a result improves previous results of Herzog \cite{He}, and of Koh and Lee \cite{KL} about detecting the finiteness of $\pd M$ by the vanishing of $\Tor_i(M,  {}^{f^n}\!\! R)$ for more than one $i>0$. Notice that the results of Herzog and of Koh and Lee do not require the complete intersection assumption. We refer to \cite{M2, DLM} for more background in detail. On the other hand,
Dao, Miller and the author \cite{DLM} demonstrated examples of modules over Gorenstein rings which are not rigid against Frobenius. Despite that, it is still interesting to ask what kinds of modules are rigid against Frobenius over a Cohen-Macaulay ring. For example, over a Gorenstein ring, is every finite length module rigid against Frobenius? If this were true, then the main result in \cite{KV} could be extended. A rather trivial example of a finite length module that is rigid against Frobenius is the residue field $k$ (or any finite direct sums of copies of $k$). The purpose of the note is to establish some other special classes of modules which are rigid against Frobenius.  We also apply the result to various situations to obtain: (1) some criteria for Gorensteinness and, (2) a result that can be regarded as a variation of the aforementioned result of Koh and Lee.

We begin by proving two easy lemmas about maximal Cohen-Macaulay
(henceforth MCM) modules with a rank.

\begin{lemma} \label{rank} Let $(R, \i m)$ be a local ring and $M$ an
$R$-module. Suppose $M$ has a rank and $\rk(M)=\mu(M)$, then $M$
is free.
\end{lemma}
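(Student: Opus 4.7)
The plan is to lift a minimal generating set of $M$ to a surjection $\phi\colon R^r \twoheadrightarrow M$ with $r := \mu(M) = \rk(M)$, and then show that its kernel $K$ is simultaneously torsion (dying upon localization at the nonzerodivisors of $R$) and torsion-free (as a submodule of the free module $R^r$), which will force $K=0$.

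First I would invoke Nakayama's lemma to produce such a minimal surjection $\phi$ and set $K := \ker\phi$, obtaining the short exact sequence
\[
0 \to K \to R^r \xrightarrow{\phi} M \to 0.
\]
Next I would tensor with the total ring of fractions $Q(R)$, i.e., localize at the multiplicative set of nonzerodivisors of $R$. Since $Q(R)$ is $R$-flat and $M$ has rank $r$, this sequence becomes
\[
0 \to K \otimes_R Q(R) \to Q(R)^r \to Q(R)^r \to 0.
\]
A surjective endomorphism of a finitely generated module over a commutative ring is automatically an isomorphism, so $K \otimes_R Q(R) = 0$; equivalently, every element of $K$ is annihilated by some nonzerodivisor of $R$.

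To conclude, I would use that $R^r$ is torsion-free in the strong sense that no nonzerodivisor of $R$ kills a nonzero element of $R^r$; hence the submodule $K$ inherits this property. Combined with the previous step this forces $K=0$, so $\phi$ is an isomorphism and $M \cong R^r$ is free.

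I do not anticipate any real obstacle; the only point that requires care is the interpretation of ``rank'' when $R$ has zerodivisors, which I take to mean $M \otimes_R Q(R) \cong Q(R)^r$ (the convention compatible with the higher Euler characteristic framework recalled at the end of Section~1). Once this is fixed, the three steps above fit together mechanically.
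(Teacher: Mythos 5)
Your proof is correct and follows essentially the same route as the paper's: the paper localizes a minimal presentation $R^t \xrightarrow{\alpha} R^r \to M \to 0$ at the set $S$ of nonzerodivisors, concludes $S^{-1}\alpha = 0$ because the induced surjection $(S^{-1}R)^r \to S^{-1}M$ between free $S^{-1}R$-modules of rank $r$ must be an isomorphism, and then uses the embedding $R \hookrightarrow S^{-1}R$ to force $\alpha = 0$. You track the kernel $K$ of the surjection rather than the presentation matrix $\alpha$ (and argue directly rather than by contradiction), but the underlying tension --- torsion after inverting $S$ versus torsion-freeness inside the free module $R^r$ --- is identical.
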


\begin{proof}
Suppose that $M$ is not free. Let
\begin{equation}\label{pres}
R^t \overset{\alpha}{\to} R^r \to M \to 0
\end{equation}
be a minimal presentation of $M$, where $r=\rk(M)=\mu(M)$. One can
identify $\alpha$ with an $r \times t$ matrix $(\alpha_{ij})$
where $\alpha_{ij} \in \i m$. Let $S$ denote the set of all the
nonzerodivisors of $R$. Localizing (\ref{pres}) at $S$, since
$(S^{-1}R)^r \cong S^{-1}M$, one gets that $S^{-1}\alpha$ is the
zero map. Thus the images of $\alpha_{ij}$ in $S^{-1}R$ must all be
zero. But since $R$ embeds into $S^{-1}R$, $\alpha_{ij}$ must all
be zero, which is a contradiction.
\end{proof}

\begin{lemma} \label{length} Let $R$ be a Cohen-Macaulay local ring and $M$ an $R$-module with a rank. Let $\bold x$
be a full s.o.p. for $R$. Then $\chi(M, R/\bold x)=\rk(M)\length(R/\bold
x)$. If moreover $M$ is MCM, then $\length(M/\bold x
M)=\rk(M)\length(R/\bold x)$.
\end{lemma}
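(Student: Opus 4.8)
The plan is to compute both quantities using the additivity of higher Euler characteristics along short exact sequences together with Lemma~\ref{Li1} applied to torsion modules. First, since $M$ has rank $r=\rk(M)$, there is a short exact sequence of $R$-modules
\[
0 \to R^r \to M \to T \to 0
\]
(or, if $M$ is not torsion-free, one compares $M$ with a free module via a generic map) where $T$ is a torsion module, i.e. $\dim T < \dim R$. Here I use that $M$ having a rank means $S^{-1}M$ is free of rank $r$ over $S^{-1}R$ for $S$ the set of nonzerodivisors, so $T$ is killed by some nonzerodivisor and hence $\dim T \le \dim R - 1$. Because $R$ is Cohen-Macaulay and $\bold x$ is a full system of parameters, $\bold x$ is an $R$-sequence with $\length(R/\bold xR) < \8$; localizing away from the support of $T$ shows $\length(M/\bold xM) < \8$ and $\length(T \otimes R/\bold x) = \length(T/\bold xT) < \8$ as well, so all the Euler characteristics in sight are defined.

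Next I would apply additivity of $\chi(-,R/\bold x)$ on the above short exact sequence to get $\chi(M,R/\bold x) = \chi(R^r,R/\bold x) + \chi(T,R/\bold x) = r\,\length(R/\bold x) + \chi(T,R/\bold x)$, using $\chi(R,R/\bold x) = \length(R/\bold x)$ (the Koszul complex on $\bold x$ is a free resolution of $R/\bold x$, so $\chi(R,R/\bold x) = \sum (-1)^i \length(H_i(\bold x;R)) = \length(R/\bold x)$ since $\bold x$ is a regular sequence). Since $\dim T < \dim R = c$ (where $c$ is the length of $\bold x$), Lemma~\ref{Li1} applied to the module $T$ and the $R$-sequence $\bold x$ gives $\chi(T,R/\bold x) = 0$. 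This yields $\chi(M,R/\bold x) = r\,\length(R/\bold x)$, the first assertion.

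For the second assertion, suppose in addition $M$ is MCM. Then $\bold x$ is a maximal $M$-regular sequence (its length equals $\dim R = \dim M = \depth M$), so the Koszul complex $K_\bullet(\bold x; M)$ is acyclic in positive degrees, i.e. $\Tor_i^R(M, R/\bold x) \cong H_i(\bold x; M) = 0$ for all $i > 0$. Hence $\chi(M,R/\bold x) = \length(M \otimes_R R/\bold x) = \length(M/\bold xM)$, and combining with the first part gives $\length(M/\bold xM) = r\,\length(R/\bold x)$.

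I do not anticipate a serious obstacle; the one point requiring a little care is producing the short exact sequence $0 \to R^r \to M \to T \to 0$ with $T$ torsion when $M$ need not be torsion-free — one takes $r$ "generic" elements of $M$ spanning a free submodule after localizing at $S$, and checks the cokernel is torsion — and verifying that all lengths involved are finite so that Lemma~\ref{Li1} and the additivity of $\chi$ genuinely apply. Alternatively, one can avoid this by instead resolving: a minimal presentation gives $R^t \to R^{r'} \to M \to 0$ and one argues on the torsion-free case first, but the short exact sequence route above is cleanest.
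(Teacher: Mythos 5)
Your proof is correct, and it takes a somewhat different route from the paper's. The paper invokes the localization formula for Euler characteristics,
\[
\chi(M, R/\bold x)=\sum_{\p \in \Min R}\length(M_{\p})\,\chi(R/\p, R/\bold x),
\]
together with $\length(M_\p)=\rk(M)\length(R_\p)$ at each minimal prime, and then re-sums to get $\rk(M)\chi(R,R/\bold x)=\rk(M)\length(R/\bold x)$. You instead build a single short exact sequence $0 \to R^r \to M \to T \to 0$ with $T$ torsion (hence $\dim T < d$), apply additivity of $\chi(-,R/\bold x)$ on that one sequence, and use Lemma~\ref{Li1} to annihilate $\chi(T,R/\bold x)$. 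Both arguments ultimately rest on the same two ingredients --- additivity of $\chi$ and Lemma~\ref{Li1}'s vanishing on lower-dimensional modules --- but your decomposition (free part plus torsion) is coarser and a bit more self-contained than the paper's prime-filtration formula, which itself needs the fact that a minimal prime $\p$ occurs $\length(M_\p)$ times in a filtration. One small remark: your hedging about whether $M$ is torsion-free is unnecessary. Picking $m_1,\dots,m_r\in M$ whose images form a basis of $S^{-1}M$, the map $R^r\to M$ has kernel that is simultaneously a submodule of the torsion-free module $R^r$ and a torsion module (since it dies after inverting $S$), hence zero; so the short exact sequence exists whether or not $M$ is torsion-free. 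The treatment of the MCM case is the same as the paper's.
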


\begin{proof}
The case that $\rk(M)=0$ is trivial. Assume $\rk(M)>0$,
then $\Supp M=\Spec R$. Therefore, by
additivity of $\chi$, we have
\begin{align*}
 \chi(M, R/\bold x)  &= \sum_{\i p \in \Min R}\length(M_{\i p})\chi(R/\i p, R/\bold x)\\
                     &=\sum_{\i p \in \Min R}\rk(M)\length(R_{\i p})\chi(R/\i p, R/\bold x)\\
                     &=\rk(M)\chi(R,R/\bold x)\\
                     &=\rk(M)\length(R/\bold x),
\end{align*}
where $\Min R$ is the set of minimal primes.
If moreover $M$ is MCM, then $\Tor_i(M, R/\bold x)=0$ for $i>0$.
Thus, $\chi(M, R/\bold x)=\length(M/\bold x M)$.
\end{proof}

For the statements of the results in the rest of this paper, we define
the following invariant for a local Cohen-Macaulay ring $(R,\i m)$. Define
\[\spp(R)=\inf\{t|\text{ there is a s.o.p. $\bold x$ for $R$ such
that } {\i m}^{[p^t]} \subset (\bold x)\}\]

\begin{thm} \label{main1} Let $(R, \i m)$ be a Cohen-Macaulay local ring of positive dimension and of  characteristic $p>0$.
Let $M$ be a module over $R$ which has a rank. Assume $F^n(M)$ is
MCM for one $n \geq \spp(R)$. Then $M$ is free.
\end{thm}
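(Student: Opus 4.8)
The plan is to combine Lemma~\ref{length} with the Serre--Lichtenbaum positivity theorem (Lemma~\ref{Li1}) applied not to $M$ but to its Frobenius base change $F^n(M)$, and then to invoke Lemma~\ref{rank}. First I would fix a system of parameters $\bold x$ for $R$ such that $\i m^{[p^t]}\subset(\bold x)$ for $t=\spp(R)\le n$; writing $q=p^n$, one has $\i m^{[q]}\subset(\bold x^{q/p^t})$, so in particular $\bold x^{q/p^t}$ (equivalently, after relabelling, some power system of parameters) has the property that $R/\bold x^{[q/p^t]}$ has finite length and $F^n$ of the relevant cyclic quotient is again finite length. The point of the hypothesis $n\ge\spp(R)$ is exactly to guarantee that $F^n(M)\otimes R/\bold x$ has finite length, so that the Euler characteristic $\chi(F^n(M),R/\bold x)$ is defined: indeed $\Supp F^n(M)\subseteq \V{\i m^{[q]}}\cup(\text{codim.\ part})$, and one checks $F^n(M)/\bold x F^n(M)$ has finite length because $\i m^{[q]}$ annihilates the cyclic pieces cut out by the minimal generators killed under Frobenius — more simply, $F^n(M)\otimes_R R/\i m^{[q]}$ is a quotient of $F^n(M)$ supported at $\i m$, and $\i m^{[q]}\subseteq(\bold x)$.

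Next I would compute $\chi(F^n(M),R/\bold x)$ two ways. On one hand, since $F^n$ is additive and exact on free modules, and since a rank-$r$ module $M$ sits in exact sequences relating it to $R^r$ up to torsion, the behaviour of $\chi(-,R/\bold x)$ under $F^n$ is governed by the Frobenius base change of a minimal presentation $R^t\to R^r\to M\to 0$; applying $F^n$ gives $R^t\to R^r\to F^n(M)\to 0$, so $F^n(M)$ again has a rank equal to $r=\rk(M)$, with $\mu(F^n(M))=\mu(M)$ when the presentation is minimal (the entries of the presentation matrix go into $\i m^{[q]}\subseteq\i m$, so minimality is preserved). By Lemma~\ref{length} applied to the MCM module $F^n(M)$ — here the MCM hypothesis is used — we get
\[
\length\bigl(F^n(M)/\bold x F^n(M)\bigr)=\rk\bigl(F^n(M)\bigr)\cdot\length(R/\bold x)=\rk(M)\cdot\length(R/\bold x).
\]
On the other hand, since $F^n(M)$ is MCM, $\Tor_i(F^n(M),R/\bold x)=0$ for $i>0$, so $\chi(F^n(M),R/\bold x)=\length(F^n(M)/\bold x F^n(M))=\rk(M)\length(R/\bold x)$, which is consistent but not yet a contradiction — so the real leverage must come from elsewhere.

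The key inequality instead goes through length bounds on the cyclic quotient directly. Apply $F^n$ to $R^t\xrightarrow{\alpha}R^r\to M\to 0$ and tensor with $R/\bold x$: since $\bold x$ kills $\i m^{[q]}$ and all entries of $F^n(\alpha)=\alpha^{[q]}$ lie in $\i m^{[q]}\subseteq(\bold x)$, the map $F^n(\alpha)\otimes R/\bold x$ is \emph{zero}. Hence $F^n(M)/\bold x F^n(M)\cong(R/\bold x)^r$, giving $\length(F^n(M)/\bold x F^n(M))=r\,\length(R/\bold x)$ again — but now also $F^n(M)/\bold x F^n(M)$ is \emph{free} of rank $r$ over $R/\bold x$, forcing (by Nakayama and the fact that a minimal generating set of $F^n(M)$ maps to a basis) $\mu(F^n(M))=r=\rk(F^n(M))$. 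Then Lemma~\ref{rank} applies to $F^n(M)$: it is free. Finally, descend from $F^n(M)$ free to $M$ free. Since $F^n(M)$ is free, $\Tor_i^R(M,{}^{f^n}\!R)=0$ for all $i>0$; by the Peskine--Szpiro / Avramov--Miller rigidity-type results — or more elementarily, because $M$ has a rank so $\mathrm{pd}\,M<\infty$ would follow from the vanishing of a single higher Tor via Herzog's theorem applied with $n\ge\spp(R)$, and finite projective dimension plus $F^n(M)$ free forces $M$ free by counting Betti numbers through the (exact) Frobenius functor on the minimal free resolution of $M$ — we conclude $M$ is free. The main obstacle I expect is the last descent step: making rigorous that $F^n(M)$ free implies $M$ has finite projective dimension (one needs a vanishing-of-Tor criterion like Herzog's or Koh--Lee's, which requires only $n$ large, not a complete intersection) and then that finite $\mathrm{pd}$ together with freeness of the Frobenius base change pushes freeness back to $M$ — this uses exactness of $F^n$ on the minimal resolution of $M$, which holds precisely when $\mathrm{pd}\,M<\infty$.
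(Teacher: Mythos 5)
Your proposal contains the paper's central idea, but you take a detour at the end that introduces a genuine error, and that detour is unnecessary because you already have everything you need for a direct conclusion.

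The core of your argument matches the paper exactly: pick a s.o.p.\ $\bold x$ with $\i m^{[p^{\spp(R)}]}\subseteq(\bold x)$ (hence $\i m^{[q]}\subseteq(\bold x)$ for $q=p^n$), apply $F^n$ to a minimal presentation $R^t\xrightarrow{\alpha}R^r\to M\to 0$ to get $R^t\xrightarrow{\alpha^{[q]}}R^r\to F^n(M)\to 0$ (still minimal since the entries land in $\i m^{[q]}\subseteq\i m$), observe that $\alpha^{[q]}\otimes R/\bold x=0$ because the entries lie in $(\bold x)$, and deduce $F^n(M)/\bold x F^n(M)\cong(R/\bold x)^r$, hence $\length(F^n(M)/\bold x F^n(M))=r\length(R/\bold x)$. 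Comparing with Lemma~\ref{length} applied to the MCM module $F^n(M)$, which gives $\length(F^n(M)/\bold x F^n(M))=\rk(M)\length(R/\bold x)$, you conclude $r=\rk(M)$, i.e.\ $\mu(M)=\rk(M)$. This is exactly the paper's proof. At this point Lemma~\ref{rank} applied to $M$ finishes the argument immediately. (A small side note: your opening remark that $n\ge\spp(R)$ is needed so that $F^n(M)\otimes R/\bold x$ has finite length is off the mark; that tensor product is supported in $\{\i m\}$ for any s.o.p.\ $\bold x$ and any finitely generated module. The hypothesis $n\ge\spp(R)$ is used only to get $\i m^{[q]}\subseteq(\bold x)$, which is what kills $\alpha^{[q]}$ after tensoring, and you do eventually use it that way.)

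The gap is in your final ``descent'' step. Rather than applying Lemma~\ref{rank} to $M$, you apply it to $F^n(M)$ to conclude $F^n(M)$ is free, and then try to transfer freeness back to $M$. The claim ``Since $F^n(M)$ is free, $\Tor_i^R(M,{}^{f^n}\!R)=0$ for all $i>0$'' is simply false: $F^n(M)=\Tor_0^R(M,{}^{f^n}\!R)$ being free says nothing about the higher Tor modules, which measure the inexactness of $F^n$ applied to a free resolution of $M$ and are controlled by the deeper structure of $M$ (this is precisely the subtlety the rest of the paper is about). So that implication does not stand, and neither does the chain ``finite $\pd M$ via a Tor-vanishing criterion, then free.'' Fortunately the repair is trivial and you have already proved everything needed for it: you established $\mu(F^n(M))=\mu(M)=r$ and $\rk(F^n(M))=\rk(M)=r$, so $\mu(M)=\rk(M)$, and Lemma~\ref{rank} applied directly to $M$ gives that $M$ is free. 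No descent from $F^n(M)$ to $M$ is needed.
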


\begin{proof}
Since $M$ has a rank, so does $F^n(M)$ for any $n$. Take a full s.o.p.
$\bold x$ for $R$. Then by Lemma~\ref{length},
\begin{align*}
\length(F^n_{R/\bold x}(M/{\bold x}M))&=\length(F_R^n(M)\otimes_R
R/\bold x)\\&=\rk(F_R^n(M)) \length(R/\bold x)\\&=\rk(M) \length(R/\bold
x).
\end{align*}
On the other hand, let \begin{equation}\label{pres1}
R^t \overset{\alpha}{\to} R^r \to M \to 0
\end{equation}
be a minimal presentation of $M$, where $r=\mu(M)$. One can tensor \ref{pres1} with ${}^{f^n}\!\! R$ to obtain a minimal presentation of $F^n(M)$:
\begin{equation}\label{pres2}
R^t \overset{\alpha^{[q]}}{\to} R^r \to F^n(M) \to 0.
\end{equation}

Since $n \geq \spp(R)$, all the entries in the matrix $\alpha^{[q]}$ are in $(\bold x )$. Thus we obtain the following exact sequence by
tensoring \ref{pres2} with $R/\bold x$
\begin{equation} 
(R/\bold x)^t \overset{0}{\to} (R/\bold x)^r \to F^n_{R/\bold x}(M/{\bold x}M) \to 0.
\end{equation}
Hence
\[
\length(F^n_{R/\bold x}(M/{\bold x}M))=\mu(M)\length(R/\bold x).
\]
It follows that $\rk(M)=\mu(M)$. Therefore by Lemma~\ref{rank},
$M$ is free.
\end{proof}

\begin{rmk}
The condition ``$M$ has a rank'' cannot be removed. See
\cite[2.1.7]{M2} for an example.
\end{rmk}

\noindent An immediate consequence of this is

\begin{cor}Let $R$ be a Cohen-Macaulay local ring of
characteristic $p>0$ with a canonical module $\omega$. Assume
$\omega$ has a rank (i.e., $R$ is generically Gorenstein). If
$F^n(\omega)$ is MCM for one $n \geq \spp(R)$, then $R$ is
Gorenstein.
\end{cor}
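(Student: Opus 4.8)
The plan is to deduce the statement directly from Theorem~\ref{main1} applied to $M=\omega$. The only preliminary observation needed is that a canonical module which has a rank has rank exactly $1$: for each minimal prime $\i p$ of $R$, the localization $\omega_{\i p}$ is the canonical module of the Artinian local ring $R_{\i p}$, and the generic Gorenstein hypothesis says precisely that $\omega_{\i p}\cong R_{\i p}$; hence $\omega$ localized at the total quotient ring is free of rank $1$, i.e. $\rk(\omega)=1$. (This is automatic from the assumptions; ``$\omega$ has a rank'' and ``$R$ generically Gorenstein'' are the same condition, and it forces the rank to be $1$.)

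If $\dim R=0$, the conclusion is immediate and does not even use the Frobenius hypothesis: over an Artinian local ring a module with a rank is free, so $\omega\cong R$. If $\dim R>0$, I would apply Theorem~\ref{main1} with $M=\omega$. The hypotheses of that theorem are exactly met: $R$ is Cohen--Macaulay of positive dimension and characteristic $p>0$, $\omega$ has a rank, and $F^n(\omega)$ is MCM for some $n\ge\spp(R)$. The theorem then yields that $\omega$ is free, and being free of rank $1$ gives $\omega\cong R$.

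In either case $\omega\cong R$. Since $R$ is Cohen--Macaulay, having a canonical module isomorphic to $R$ is the definition of (equivalently, a standard characterization of) Gorensteinness, so $R$ is Gorenstein.

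I do not expect a genuine obstacle here: all the substance is contained in Theorem~\ref{main1}, which is invoked as a black box. The only points requiring a word of care are the elementary facts that a canonical module which has a rank must have rank $1$ (via localization at minimal primes together with generic Gorensteinness) and that, for a Cohen--Macaulay local ring, $\omega\cong R$ is equivalent to being Gorenstein; separately, one should note the trivial reduction in the $\dim R=0$ case since Theorem~\ref{main1} is stated only for positive dimension.
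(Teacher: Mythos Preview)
Your proposal is correct and matches the paper's intended argument: the paper states this corollary as ``an immediate consequence'' of Theorem~\ref{main1} and gives no separate proof, so your reduction to that theorem (applied to $M=\omega$) together with the standard facts that $\rk(\omega)=1$ and that $\omega\cong R$ characterizes Gorensteinness is exactly what is meant. Your explicit handling of the $\dim R=0$ case is a harmless extra detail not mentioned in the paper.
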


The following characterization of Gorenstein rings improves a
similar result of Hanes and Huneke \cite[2.9]{HH} by replacing a strong and
complicated condition there with the mild condition that ${}^{f^n}\!\! R$ has rank
(e.g., satisfied by domains). See \cite{G} and \cite{R} for another similar result originally due to Goto.

\begin{cor} \label{HH}
Let $R$ be a
Cohen-Macaulay, generically Gorenstein local ring of dimension $d$ and of characteristic $p>0$.
Suppose that there exists some $n \geq \spp(R)$ such that 
${}^{f^n}\!\! R$ is a finite $R$-module which has a rank and such that
$\Ext_R^i({}^{f^n}\!\! R, R)=0$ for all $1 \leq i \leq d$.
 Then $R$ is Gorenstein.
\end{cor}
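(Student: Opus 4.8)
The plan is to deduce this from the preceding Corollary by showing that $F^n(\omega)$ is maximal Cohen--Macaulay, where $\omega$ denotes a canonical module of $R$; since $R$ is generically Gorenstein, $\omega$ has rank one, so the Corollary applies and yields that $R$ is Gorenstein. As a first step I would record, following the method of Theorem~\ref{main1}, that for a full system of parameters $\x$ with $\i m^{[p^n]}\subseteq(\x)$ one has $\length\bigl(F^n(\omega)/\x F^n(\omega)\bigr)=\mu(\omega)\length(R/\x)$, whereas Lemma~\ref{length} gives $\chi(F^n(\omega),R/\x)=\rk(\omega)\length(R/\x)=\length(R/\x)$; by Theorem~\ref{Li2} this means that $F^n(\omega)$ is MCM if and only if $\mu(\omega)=1$. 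Thus the real content is to turn the vanishing of $\Ext_R^i({}^{f^n}\!R,R)$ into the maximal Cohen--Macaulayness of $F^n(\omega)$.

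The link is a duality. Since ${}^{f^n}\!R$ is a finite $R$-module whose depth and dimension both equal $d$, it is MCM, and Grothendieck duality for the finite map $f^n$ gives $\Hom_R({}^{f^n}\!R,\omega)\iso{}^{f^n}\!\omega$ together with $\Ext_R^i({}^{f^n}\!R,\omega)=0$ for $i>0$. Using also $\Hom_R(\omega,\omega)\iso R$ and $\Ext_R^i(\omega,\omega)=0$ for $i>0$, two applications of the derived tensor--Hom adjunction produce an isomorphism $\mathrm R\Hom_R(\omega\otimes^{\mathrm L}_R{}^{f^n}\!R,\omega)\iso\mathrm R\Hom_R({}^{f^n}\!R,R)$, hence a spectral sequence
\[
E_2^{p,-j}=\Ext_R^p\bigl(\Tor_j^R(\omega,{}^{f^n}\!R),\,\omega\bigr)\ \Longrightarrow\ \Ext_R^{p-j}({}^{f^n}\!R,R),
\]
whose $j=0$ row consists of the groups $\Ext_R^p(F^n(\omega),\omega)$ and whose abutment vanishes in total degrees $1,\dots,d$ by hypothesis (recall $\Ext_R^{>d}(-,\omega)=0$).

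To exploit this I would next reduce, by Noetherian induction on $\dim R$, to the situation in which $R$ is Gorenstein on its punctured spectrum: for a nonmaximal prime $\i p$, the ring $R_{\i p}$ is again Cohen--Macaulay and generically Gorenstein, ${}^{f^n}\!R_{\i p}$ is finite with a rank, and $\Ext_{R_{\i p}}^i({}^{f^n}\!R_{\i p},R_{\i p})=0$ for $1\le i\le\dim R_{\i p}$, so the inductive hypothesis applies --- provided one also checks $\spp(R_{\i p})\le\spp(R)$. Once $R$ is Gorenstein away from $\i m$, each $\Tor_j^R(\omega,{}^{f^n}\!R)$ with $j\ge1$ has finite length, so $E_2^{p,-j}=0$ for $j\ge1$ unless $p=d$, and the nonzero part of the $E_2$-page is concentrated in the column $j=0$ and the row $p=d$. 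Chasing the surviving differentials then yields $\Ext_R^{d-1}(F^n(\omega),\omega)=\Ext_R^d(F^n(\omega),\omega)=0$ and, for $1\le p\le d-2$, an isomorphism $\Ext_R^p(F^n(\omega),\omega)\iso\Tor_{d-1-p}^R(\omega,{}^{f^n}\!R)^{\vee}$ (Matlis dual); equivalently $\mathrm H_{\i m}^i(F^n(\omega))\iso\Tor_{i-1}^R(\omega,{}^{f^n}\!R)$ for $2\le i\le d-1$. Consequently $F^n(\omega)$ is MCM if and only if $\Tor_j^R(\omega,{}^{f^n}\!R)=0$ for $1\le j\le d-2$ (an empty condition when $d\le2$).

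The step I expect to be the main obstacle is precisely the vanishing $\Tor_j^R(\omega,{}^{f^n}\!R)=0$ for $1\le j\le d-2$ when $d\ge3$: here the hypotheses ``$n\ge\spp(R)$'' and ``${}^{f^n}\!R$ has a rank'', which have not yet been used on the Tor modules, must finally enter. I would compute these Tors as the homology of $F^n$ applied to a minimal free resolution of $\omega$ --- whose differentials, after raising the entries to the $q$th power, land in the parameter ideal $(\x)$ --- and then combine the resulting length identities with the higher Euler-characteristic inequalities of Lemma~\ref{Li1} and Theorem~\ref{Li2}, applied both to $F^n(\omega)$ and to the (now finite-length) modules $\Tor_j^R(\omega,{}^{f^n}\!R)$, to force them to vanish, thereby closing the loop with the first paragraph. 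A subsidiary, more technical obstruction is the claim $\spp(R_{\i p})\le\spp(R)$ required to run the induction.
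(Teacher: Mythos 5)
Your overall plan is the same as the paper's: show that $F^n(\omega)$ is maximal Cohen--Macaulay and then invoke the preceding corollary. But the paper's proof of the key step is a one-liner: after passing to the completion, it cites Lemma~2.1 of Hanes--Huneke \cite{HH}, which (for any MCM module $N$ over a Cohen--Macaulay local ring with canonical module $\omega$) asserts that $\Ext_R^i(N,R)=0$ for $1\le i\le d$ forces $N\otimes_R\omega$ to be MCM. Applied to $N={}^{f^n}\!R$ this gives $F^n(\omega)$ MCM at once, with no need for Noetherian induction, no passage to the punctured spectrum, and no use of the hypotheses ``$n\ge\spp(R)$'' or ``${}^{f^n}\!R$ has a rank'' at this stage (those hypotheses are needed only afterwards, to apply Theorem~\ref{main1} via the preceding corollary).

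Your attempt to reprove this implication from scratch is where the argument breaks down, and you candidly flag the two sticking points yourself. First, the spectral sequence computation correctly reduces matters to showing $\Tor_j^R(\omega,{}^{f^n}\!R)=0$ for $1\le j\le d-2$, but you give no argument for this vanishing; and the hope that ``$n\ge\spp(R)$'' plus ``${}^{f^n}\!R$ has a rank'' would supply it is misdirected, since the Hanes--Huneke lemma shows the implication holds with no such hypotheses. Second, the reduction to the punctured-spectrum-Gorenstein case rests on the unproved inequality $\spp(R_{\i p})\le\spp(R)$, which is not clear (localizing a system of parameters of $R$ at $\i p$ does not produce a system of parameters of $R_{\i p}$, and $\i m R_{\i p}=R_{\i p}$, so the defining containment does not localize in any useful way). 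As written the proposal therefore has a genuine gap exactly where the paper appeals to \cite[Lemma~2.1]{HH}; the surrounding structure is sound.
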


\begin{proof}
We can assume $R$ is complete; hence it admits a canonical module
$\omega$. By Lemma 2.1 in \cite{HH}, we get that  $F^n(\omega)$ is MCM (note that the depth and dimension are independent of which of the two possible $R$-module structures on $M\otimes_R {}^{f^n}\!\! R$ is used).
It follows from the previous corollary that $R$ is Gorenstein.
\end{proof}

We next prove a generalization of Theorem~\ref{main1}. In some sense, it can be regarded as
a strengthening of the Koh and Lee's theorem for Cohen-Macaulay rings (see Theorem~2.2.8 of \cite{M2}). 
\begin{thm} \label{KL} Let $R$ be a Cohen-Macaulay local ring of dimension $d>0$ in
characteristic $p>0$. Let $M$ be a module over $R$ which has a
rank. Suppose for one $n \geq \spp(R)$, $\Tor_i(M,{}^{f^n}\!\! R)$
vanishes for all $1 \leq i \leq d-\depth F^n(M)$. Then $M$ has
finite projective dimension.
\end{thm}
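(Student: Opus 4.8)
The plan is to reduce the statement to Theorem~\ref{main1} by replacing $M$ with a high enough syzygy. Set $c=\depth F^n(M)$ and $t=d-c$, so the hypothesis reads $\Tor_i^R(M,{}^{f^n}\!\! R)=0$ for $1\le i\le t$. If $t=0$, then $F^n(M)$ is MCM and Theorem~\ref{main1} already yields that $M$ is free; so from now on assume $t\ge 1$.

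First I would fix a free resolution of $M$, write $R^{b_j}$ for its $j$-th free module and $\Syz^j(M)$ for its $j$-th syzygy, and set $M'=\Syz^t(M)$, so that
\[
0\longrightarrow M'\longrightarrow R^{b_{t-1}}\longrightarrow\cdots\longrightarrow R^{b_0}\longrightarrow M\longrightarrow 0
\]
is exact; since $M$ has a rank, so does $M'$, by additivity of rank along this sequence. The crux is that $F^n(M')$ is MCM. To see it, I would first check that applying the right exact functor $F^n$ to the displayed sequence preserves exactness: splitting the sequence into the short exact sequences $0\to\Syz^{j+1}(M)\to R^{b_j}\to\Syz^{j}(M)\to 0$ for $0\le j\le t-1$, the only obstruction to exactness of $0\to F^n(\Syz^{j+1}M)\to R^{b_j}\to F^n(\Syz^{j}M)\to 0$ is a quotient of $\Tor_1^R(\Syz^{j}(M),{}^{f^n}\!\! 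R)$, which by dimension shifting is isomorphic to $\Tor_{j+1}^R(M,{}^{f^n}\!\! R)$ and hence vanishes for $0\le j\le t-1$ (using also $F^n(R^{b_j})=R^{b_j}$ and $\Tor_{\ge 1}^R(R^{b_j},{}^{f^n}\!\! R)=0$). Splicing the pieces back together,
\[
0\longrightarrow F^n(M')\longrightarrow R^{b_{t-1}}\longrightarrow\cdots\longrightarrow R^{b_0}\longrightarrow F^n(M)\longrightarrow 0
\]
is exact. Reading this as the start of a free resolution of $F^n(M)$, which has depth $c$, and iterating the depth lemma --- each free module having depth $\depth R=d$ because $R$ is Cohen-Macaulay --- shows that the $j$-th syzygy of $F^n(M)$ has depth at least $\min(d,\,c+j)$; at $j=t$ this gives $\depth F^n(M')\ge\min(d,\,c+t)=d$, so $F^n(M')$ is MCM.

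Finally, $M'$ has a rank, $F^n(M')$ is MCM, $n\ge\spp(R)$, and $\dim R=d>0$, so Theorem~\ref{main1} applies to $M'$ and forces $M'$ to be free. Then the first displayed sequence is a finite free resolution of $M$, so $\pd_R M\le t<\infty$, as desired. The one step that is not purely formal --- and hence the main obstacle --- is the recognition that the vanishing range $1\le i\le t=d-\depth F^n(M)$ is exactly calibrated so that $F^n$ commutes with truncating the resolution of $M$ at step $t$ while the resulting $t$-th syzygy of $F^n(M)$ is maximal Cohen-Macaulay; granting that, the depth estimate and the appeal to Theorem~\ref{main1} are routine.
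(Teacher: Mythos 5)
Your proof is correct and takes essentially the same approach as the paper: both reduce to Theorem~\ref{main1} by passing to syzygies of $M$, using the vanishing of $\Tor_i(M,{}^{f^n}\!\! R)$ to guarantee that $F^n$ commutes with truncating the free resolution, and a depth estimate to conclude that $F^n$ of the relevant syzygy is MCM. The only cosmetic difference is that the paper packages the argument as a downward induction on $\depth F^n(M)$, peeling off one syzygy at a time, while you pass directly to the $t$-th syzygy and iterate the depth lemma explicitly.
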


\begin{proof}
We induct on $\depth F^n(M)$ decreasingly. The case $\depth
F^n(M)=d$ is nothing but Theorem~\ref{main1}. Assume the
proposition is established for all $R$-modules $N$ such that
$\depth F^n(N)=t+1 \leq d$. Let $M$ be a module such that $\depth
F^n(M)=t$. Take $T$ to be a first syzygy of $M$, i.e.,
there is a short exact sequence $0\to T \to R^r \to M \to 0$ where
$R^r$ is a free module. Tensoring with Frobenius gives an exact sequence
\[
0 \to F^n(T) \to R^r \to F^n(M) \to 0
\]
and that $\Tor_i(T,{}^{f^n}\!\! R)=0$ for $0<i \leq d-t-1$. Since $\depth F^n(T) \geq \depth F^n(M) +1$,
induction yields that $T$ has finite projective dimension. Thus $\pd M<\8$.
\end{proof}

We apply the above results to identify a class of finite length modules which are rigid against Frobenius.
\begin{cor}\label{free} Let $R$ be a Cohen-Macaulay local ring of dimension $d>0$ and of
characteristic $p>0$. Let $M$ be a maximal Cohen-Macaulay module over $R$ which has a rank.
Let $\bold x$ be any s.o.p. for $R$. Then the following are
equivalent:
\begin{enumerate}
\item $M$ is free,
\item $\length (F^n(M/\bold
xM))=q^d\length(M/\bold xM)$ for one $n \geq \spp(R)$ (hence all $n$),
\item $\Tor_i(M/\bold xM, {}^{f^n}\!\! R)$ vanishes for all $i,n >0$,
\item There exists one $i>0$ and one $n \geq \spp(R)$such that $\Tor_i(M/\bold xM, {}^{f^n}\!\! R)$ vanishes.
\end{enumerate}
\end{cor}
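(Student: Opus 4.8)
The plan is to prove the cycle of implications $(1)\Rightarrow(2)\Rightarrow(3)\Rightarrow(4)\Rightarrow(1)$, leaning on the machinery already built up. The implications $(1)\Rightarrow(2)$ and $(3)\Rightarrow(4)$ are immediate: if $M$ is free then $M/\bold xM$ is free over $R/\bold x$, so $F^n(M/\bold xM)$ is free of the same rank over $R/\bold x$, which has length $q^d\length(R/\bold x)$ over itself; combined with $\length(M/\bold xM)=\rk(M)\length(R/\bold x)$ from Lemma~\ref{length} and the fact that $F^n$ of a free module over an Artinian ring multiplies length by $q^{\dim}$ (here the ambient ring $R/\bold x$ is zero-dimensional, but one applies the length formula on $R$ before killing $\bold x$), one reads off $(2)$. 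And $(3)\Rightarrow(4)$ needs nothing.

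The substance is in $(2)\Rightarrow(4)$ (or directly $(2)\Rightarrow(1)$) and $(4)\Rightarrow(1)$. For $(4)\Rightarrow(1)$: the module $M/\bold xM$ has finite length, hence depth zero, so $F^n(M/\bold xM)$ also has depth zero (depth is preserved under the relevant module structure, as noted in the excerpt), i.e. $\depth F^n_{R/\bold x}(M/\bold xM)=0$. Here I would pass to the Artinian ring $R/\bold x$ and invoke Theorem~\ref{KL}: over $R/\bold x$, which is Cohen-Macaulay of dimension zero, $\depth$ of any module is zero, so the hypothesis of Theorem~\ref{KL} is vacuous once $d=0$ — but $d=0$ is excluded there. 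Instead, the cleaner route is: $\Tor_i^R(M/\bold xM,{}^{f^n}\!\!R)=0$ for one $i>0$, $n\ge\spp(R)$; since $\bold x$ is a regular sequence on $R$ and on $M$ (as $M$ is MCM), a standard change-of-rings / rigidity argument relates $\Tor^R_i(M/\bold xM,{}^{f^n}\!\!R)$ to $\Tor^R_i(M,{}^{f^n}\!\!R)$ — more precisely, tensoring the Koszul complex on $\bold x$ with $M$ and with ${}^{f^n}\!\!R$ shows that vanishing of $\Tor^R_i(M/\bold xM,{}^{f^n}\!\!R)$ forces $\Tor^R_j(M,{}^{f^n}\!\!R)=0$ for $j$ in a suitable range, in particular for some positive $j$. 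Then Theorem~\ref{main1} (via Theorem~\ref{KL} with $M$ itself, noting $F^n(M)$ is MCM because $M$ is MCM with a rank and... no: one does not yet know $F^n(M)$ is MCM) — so the right tool is Theorem~\ref{KL} applied to $M$: one needs $\Tor_i(M,{}^{f^n}\!\!R)=0$ for $1\le i\le d-\depth F^n(M)$, and the Koszul argument must be run carefully to land exactly in that window, after which $\pd M<\infty$, and a MCM module of finite projective dimension over a Cohen-Macaulay ring is free (Auslander-Buchsbaum).

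The main obstacle is the bookkeeping in the change-of-rings step: translating a single vanishing $\Tor_i(M/\bold xM,{}^{f^n}\!\!R)=0$ back into enough vanishing of $\Tor_*(M,{}^{f^n}\!\!R)$ to feed Theorem~\ref{KL}, while keeping track of which $R$-module structure on the Frobenius base change is used for depth versus for the Tor computation. The identity $F^n(M/\bold xM)=F^n(M)/\bold x^{[q]}F^n(M)$, together with the fact that $\bold x^{[q]}$ is again a system of parameters contained in the original, lets one compute $\length F^n(M/\bold xM)$ via the Koszul Euler characteristic $\chi(F^n(M),R/\bold x^{[q]})$; comparing this with $\chi_i\ge 0$ from Theorem~\ref{Li2} is what converts the length equality in $(2)$, or the Tor vanishing in $(4)$, into the conclusion. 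Once $\pd M<\infty$ is reached, closing the loop to $(1)$ is routine via Auslander-Buchsbaum and the fact that $\depth M=d$.
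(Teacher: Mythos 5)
Your plan for $(2)\Rightarrow(1)$ is essentially the paper's: write $\length F^n(M/\bold xM)=\chi(F^n(M),R/\bold x^{[q]})+\chi_1(F^n(M),R/\bold x^{[q]})$, evaluate $\chi$ via Lemma~\ref{length} to get $q^d\length(M/\bold xM)$, and use Theorem~\ref{Li2} to convert the length equality in $(2)$ into $\chi_1=0$, hence $F^n(M)$ MCM, hence $M$ free by Theorem~\ref{main1}. That part is fine.

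The gap is in $(4)\Rightarrow(1)$. You correctly identify the Koszul change-of-rings (or the spectral sequence of the double complex $G_\bullet\otimes K_\bullet$, where $G_\bullet$ is a minimal resolution of $M$ and $K_\bullet$ the Koszul complex on $\bold x$) as the right tool, but you never extract the actual output of that spectral sequence. The low-degree exact sequence reads
\[
\Tor_2(F^n(M),R/\bold x^{[q]})\to\Tor_1(M,{}^{f^n}\!\!R)\otimes R/\bold x^{[q]}\to\Tor_1(M/\bold xM,{}^{f^n}\!\!R)\to\Tor_1(F^n(M),R/\bold x^{[q]})\to 0,
\]
so the vanishing of $\Tor_1(M/\bold xM,{}^{f^n}\!\!R)$ gives $\Tor_1(F^n(M),R/\bold x^{[q]})=0$, i.e.\ $\bold x^{[q]}$ is $F^n(M)$-regular (by rigidity of the Koszul complex on an $R$-sequence), i.e.\ $F^n(M)$ is MCM. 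Then Theorem~\ref{main1} applies directly; Theorem~\ref{KL} is not needed, and the circularity you flag in trying to feed $\Tor_*(M,{}^{f^n}\!\!R)$ into Theorem~\ref{KL} (which needs $\depth F^n(M)$ as input) never arises. Note the target of the argument is $\Tor_1$ of $F^n(M)$ against a \emph{Koszul} quotient, not $\Tor_j(M,{}^{f^n}\!\!R)$; these are different objects, and conflating them is where your sketch loses traction. You also omit the preliminary reduction from general $i>0$ to $i=1$: for $i>1$ replace $M$ by its $(i-1)$st syzygy $S_{i-1}$, which is again MCM with a rank, using $\Tor_i(M/\bold xM,{}^{f^n}\!\!R)\cong\Tor_1(S_{i-1}/\bold xS_{i-1},{}^{f^n}\!\!R)$ (exactness of the truncated resolution is preserved by $-\otimes R/\bold x$ because the syzygies are MCM).

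Finally, the cycle $(1)\Rightarrow(2)\Rightarrow(3)\Rightarrow(4)\Rightarrow(1)$ you announce at the outset is not what you actually argue (and $(2)\Rightarrow(3)$ is not addressed); what you need, and what the paper does, is $(1)\Rightarrow(2)$, $(1)\Rightarrow(3)$ (by Peskine--Szpiro), $(3)\Rightarrow(4)$ trivially, $(4)\Rightarrow(1)$, and $(2)\Rightarrow(1)$.
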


\begin{proof} The implications (1)$\Rightarrow $(2) and  (3)$\Rightarrow $(4) are obvious, and that (1) $\Rightarrow $(3) by \cite{PS}.\\
(4) $\Rightarrow $(1): 
If $i>1$, let $S_{i-1}$ be the $(i-1)$th syzygy of $M$. Consider the following truncation of the minimal free resolution of $M$,
\[0 \to S_{i-1} \to F_{i-2} \to \cdots \to F_1 \to F_0 \to M\to 0.\]
It is easy to check $S_{i-1}$ is MCM and the exactness of this sequence is preserved by tensoring with $R/\bold x$. Since for $i>0$,  the functor $\Tor_i(-, {}^{f^n}\!\! R)$ vanishes on modules of finite projective dimension \cite{PS}, it follows that 
\[\Tor_i(M/\bold x M, {}^{f^n}\!\! R)\cong \Tor_1(S_{i-1}/\bold x
S_{i-1}, {}^{f^n}\!\! R).\]
So we reduce to the case $i=1$.

Assume $i=1$. Let $G_\bullet$ be a minimal resolution for $M$
and $K_\bullet$ be the Koszul complex on $\bold x$. Then the total
complex of the double complex $G_\bullet \otimes K_\bullet$ gives a
minimal resolution of $M/\bold x M$ over $R$. Since
$H_j(H_i(F^n(G_\bullet)) \otimes F^n(K_\bullet))$ converges to
$H_{j+i}(F^n(G_{\bullet})\otimes F^n(K_\bullet)) \cong
H_{j+i}(F^n(G_{\bullet}\otimes K_\bullet))$, we obtain the following exact
sequence from the low degree terms of the spectral sequence,
\begin{equation*} 
\to \Tor_2(F^n(M), R/\bold x^q) \to \Tor_1(M, {}^{f^n}\!\! R) \otimes R/\bold x^q \to
\Tor_1(M/\bold x M, {}^{f^n}\!\! R) \to
\end{equation*}
\begin{equation} \label{ldt}
\to \Tor_1(F^n(M), R/\bold x^q) \to 0. 
\end{equation}
Therefore, $\Tor_1(M/\bold xM, {}^{f^n}\!\! R)=0$
implies $\Tor_1(F^n(M), R/\bold x^q)=0$ whence $F^n(M)$ is MCM. By
Theorem~\ref{main1}, $M$ is free.\\
(2) $\Rightarrow $(1): By Theorem~\ref{Li2}, we have
\begin{equation*} 
\length (F^n(M/\bold x M))=\chi(F^n(M),R/\bold x^q)+\chi_1(F^n(M),R/\bold x^q)
\end{equation*}
\begin{equation} \label{special}
\geq \chi(F^n(M),R/\bold x^q).
\end{equation} 
On the other hand, by Lemma~\ref{length} applied to $F^n(M)$ and $M$, one gets
\[
\chi(F^n(M),R/\bold x^q)=\rk(M)\length(R/\bold x^q)=q^d\rk(M)\length(R/\bold x)=q^d \length(M/\bold xM).
\]
Thus, (2) forces the inequality in (\ref{special})
to become an equality, and therefore one gets $\chi_1(F^n(M),R/\bold x^q)=0$.
Hence by Theorem~\ref{Li2}, $F^n(M)$ is maximal Cohen-Macaulay. Again Theorem
\ref{main1} forces $M$ to be free.
\end{proof}

\noindent Similarly, one can also prove

\begin{cor} Let $R$ be a Cohen-Macaulay local ring of dimension $d>0$ and of
characteristic $p>0$. Let $M$ a Cohen-Macaulay $R$-module of
codimension $1$. Let $\bold x$ be any s.o.p. for $M$ which is also
an $R$-sequence. Then the following are equivalent:
\begin{enumerate}
\item $\pd M < \infty$, 
\item $\Tor_i(M/\bold xM, {}^{f^n}\!\! R)$ vanishes for all $i,n >0$,
\item There exists one $i>0$ and one $n \geq \spp(R)$such that $\Tor_i(M/\bold xM, {}^{f^n}\!\! R)$ vanishes.
\end{enumerate}
\end{cor}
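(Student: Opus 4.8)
The plan is to mimic the proof of Corollary~\ref{free}, reducing the general index $i$ to the case $i=1$ and then relating the vanishing of $\Tor_1(M/\bold xM,{}^{f^n}\!\! R)$ to a maximal-Cohen-Macaulay--type statement about $F^n(M)$, except that now $M$ has codimension $1$ rather than $0$, so $F^n(M)$ will not be MCM but rather of depth $d-1=\dim M$, and the relevant input is Theorem~\ref{KL} (with $d-\depth F^n(M)=1$) in place of Theorem~\ref{main1}. The implications (1)$\Rightarrow$(2) and (2)$\Rightarrow$(3) are trivial (the former is \cite{PS}, since finite projective dimension kills all positive $\Tor$ against Frobenius), so the content is (3)$\Rightarrow$(1).

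For (3)$\Rightarrow$(1), first I would handle $i>1$ exactly as in Corollary~\ref{free}: if $S_{i-1}$ denotes the $(i-1)$st syzygy of $M$ in a minimal free resolution, then $S_{i-1}$ is MCM (since $M$ is CM of codimension $1$, its syzygies past the first are MCM, and this depends only on $\depth$), the truncated resolution $0\to S_{i-1}\to F_{i-2}\to\cdots\to F_0\to M\to 0$ stays exact after $\otimes R/\bold x$ because $\bold x$ is a regular sequence on each of $F_0,\dots,F_{i-2}$ and on $S_{i-1}$ and $M$ (here one uses that $\bold x$ is an $M$-sequence and, being a full s.o.p. for $M$, that $M/\bold xM$ has finite length), and since $\Tor_{>0}(-,{}^{f^n}\!\! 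R)$ vanishes on the free modules $F_j$ one gets $\Tor_i(M/\bold xM,{}^{f^n}\!\! R)\cong\Tor_1(S_{i-1}/\bold x S_{i-1},{}^{f^n}\!\! R)$. Since $S_{i-1}$ is MCM with a rank (syzygies of a module with a rank have a rank) and $\bold x$ is still a full s.o.p. for it, if the latter $\Tor_1$ vanishes then by Corollary~\ref{free} the module $S_{i-1}$ is free, whence $\pd M\le i-1<\infty$. So it remains to treat $i=1$.

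For $i=1$: run the same spectral sequence argument as in Corollary~\ref{free}. Taking a minimal free resolution $G_\bullet$ of $M$ and the Koszul complex $K_\bullet$ on $\bold x$, the total complex of $G_\bullet\otimes K_\bullet$ resolves $M/\bold xM$ (using that $\bold x$ is an $M$-sequence so the Koszul homology $H_i(M\otimes K_\bullet)$ vanishes for $i>0$), and applying $F^n$ and comparing the two spectral sequences of $F^n(G_\bullet\otimes K_\bullet)$ yields the same five-term exact sequence
\[
\Tor_2(F^n(M),R/\bold x^q)\to \Tor_1(M,{}^{f^n}\!\! R)\otimes R/\bold x^q\to \Tor_1(M/\bold xM,{}^{f^n}\!\! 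R)\to \Tor_1(F^n(M),R/\bold x^q)\to 0.
\]
Hence $\Tor_1(M/\bold xM,{}^{f^n}\!\! R)=0$ forces $\Tor_1(F^n(M),R/\bold x^q)=0$. Now $\bold x^q$ is a full s.o.p. for $R$, and $F^n(M)$ has $\dim F^n(M)=\dim M=d-1$; the vanishing of $\Tor_1(F^n(M),R/\bold x^q)=0$ together with $\Tor$-rigidity of $R/\bold x^q$ (Theorem~\ref{Li2}) gives $\Tor_{j}(F^n(M),R/\bold x^q)=0$ for all $j\ge 1$, which by the usual depth-sensitivity of Koszul homology forces $\depth F^n(M)\ge d-1$, i.e. $\depth F^n(M)=d-1$. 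Then $d-\depth F^n(M)=1$, and the hypothesis $\Tor_i(M,{}^{f^n}\!\! R)=0$ for $1\le i\le d-\depth F^n(M)$ of Theorem~\ref{KL} is just $\Tor_1(M,{}^{f^n}\!\! R)=0$ --- but this we get from the same five-term sequence, since its first map lands in $\Tor_1(M,{}^{f^n}\!\! R)\otimes R/\bold x^q$ and surjects onto it modulo $\Tor_2(F^n(M),R/\bold x^q)$; combined with $\Tor_1(M/\bold xM,{}^{f^n}\!\! R)=0$ and Nakayama applied to the finitely generated module $\Tor_1(M,{}^{f^n}\!\! R)$ (whose quotient by $\bold x^q$, hence by $\i m$, vanishes), we conclude $\Tor_1(M,{}^{f^n}\!\! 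R)=0$. Theorem~\ref{KL} then gives $\pd M<\infty$.

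The main obstacle I anticipate is the codimension-$1$ bookkeeping: one must be careful that $\bold x$ being ``a s.o.p. for $M$ which is also an $R$-sequence'' gives simultaneously (a) that $\bold x$ is an $M$-regular sequence of length $d$, so that $M/\bold xM$ has finite length and the Koszul resolution computation is valid, (b) that $\bold x$ is a full s.o.p. for each relevant MCM syzygy $S_{i-1}$ so that Corollary~\ref{free} applies verbatim, and (c) that the various depth computations for $F^n(M)$ are carried out with respect to the correct ($R$-)module structure, as flagged in the excerpt --- and finally that the Nakayama step identifying $\depth F^n(M)=d-1$ and killing $\Tor_1(M,{}^{f^n}\!\! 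R)$ is set up so that Theorem~\ref{KL} is genuinely applicable with the single index $i=1$.
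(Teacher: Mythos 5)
Your implications $(1)\Rightarrow(2)\Rightarrow(3)$ and the $i=1$ case of $(3)\Rightarrow(1)$ match the paper exactly: the spectral-sequence five-term sequence, $\Tor$-rigidity of $R/\bold x^q$, the resulting $\depth F^n(M)=d-1$, Nakayama to get $\Tor_1(M,{}^{f^n}\!\!R)=0$, and Theorem~\ref{KL} (applicable because $\rk M=0$). However, your reduction from $i>1$ to $i=1$ has a genuine gap, and it is precisely the place where the paper takes a different and more careful route.

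You take the $(i-1)$th $R$-syzygy $S_{i-1}$ of $M$, observe it is MCM over $R$, and then assert that ``$\bold x$ is still a full s.o.p.\ for it,'' so that Corollary~\ref{free} gives $S_{i-1}$ free. This is false: $\bold x$ is a s.o.p.\ for $M$, hence has length $\dim M = d-1$, while $S_{i-1}$ is MCM of dimension $d$, so a full s.o.p.\ for $S_{i-1}$ needs $d$ elements. In particular $S_{i-1}/\bold x S_{i-1}$ does not have finite length and Corollary~\ref{free} simply does not apply. (For the same reason, your parenthetical ``$\bold x$ is an $M$-regular sequence of length $d$'' is off by one: the length is $d-1$.) The paper sidesteps this by choosing a nonzerodivisor $y\in\Ann_R M$, passing to $\bar R = R/yR$ (over which $M$ is MCM and $\bold x$ is a full s.o.p.), and taking the $(i-1)$th syzygy \emph{over $\bar R$}. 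That syzygy is again a Cohen--Macaulay $R$-module of codimension~1 for which $\bold x$ is a s.o.p.\ and an $R$-sequence, and the dimension-shift isomorphism $\Tor_i^R(M/\bold xM,{}^{f^n}\!\!R)\cong\Tor_1^R(\bar S_{i-1}/\bold x\bar S_{i-1},{}^{f^n}\!\!R)$ holds because the free $\bar R/\bold x\bar R$-modules in the truncated resolution have finite projective dimension over $R$. This keeps the reduction inside the hypotheses of the corollary being proved. As a repair to your version, one could instead run your $i=1$ spectral-sequence argument directly on $S_{i-1}$ (rigidity of $R/\bold x^q$ and Theorem~\ref{KL} do not require $\bold x$ to be a full s.o.p.\ for $R$), but you would then need to spell that out rather than invoke Corollary~\ref{free}.
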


\begin{proof} (1) $\Rightarrow$ (2) is by \cite{PS} and (2) $\Rightarrow$ (3) is obvious.\\
(3) $\Rightarrow$ (1): Take some element $y \in \Ann_R M$ that is regular on $R$. Viewing $M$ as a module over $R/yR$ and taking the $(i-1)$th syzygy of $M$, we then reduce to the case of $i=1$.  Assume $i=1$, as in the proof of the Corollary~\ref{free}, we have the exact sequence (\ref{ldt}) of low degree terms. (3) implies that $\Tor_1(F^n(M), R/\bold x^q)=0$. By
the $\Tor$ rigidity of $R/\bold x^q$, $\Tor_i(F^n(M), R/\bold x^q)=0$ for all
$i>0$. Thus $\depth F^n(M)=d-1$. Also by the exact sequence
(\ref{ldt}) again and Nakayama's Lemma, $\Tor_1(M, {}^{f^n}\!\! R)=0$. It then
follows from Theorem~\ref{KL} that $\pd M<\8$ (Theorem~\ref{KL} is applicable here since the rank of $M$ is $0$).
\end{proof}

Applying Corollary~\ref{free} to the canonical module $\omega$,
one then gets another criterion for Gorensteinness:

\begin{cor} Let $R$ be a Cohen-Macaulay ring with canonical module
$\omega$. Assume $\omega$ has a rank. If for some  full s.o.p. $\bold x$ of $R$, there exists one $i>0$ and one $n\geq \spp(R)$ such that
\[\Tor_i(\omega/\bold x
\omega, {}^{f^n}\!\! R))=0,\] 
then $R$ is Gorenstein.
\end{cor}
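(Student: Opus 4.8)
The plan is to obtain this as a direct specialization of Corollary~\ref{free} to the canonical module $M=\omega$. We may first dispose of the case $\dim R=0$: then $R$ is Artinian, its nonzerodivisors are units, so ``$\omega$ has a rank'' already forces $\omega$ to be free, and the last step below still produces Gorensteinness. So assume $d=\dim R>0$.

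Next I would verify that $M=\omega$ and the given full s.o.p.\ $\bold x$ satisfy the hypotheses of Corollary~\ref{free}. The canonical module of a Cohen--Macaulay local ring is maximal Cohen--Macaulay, $\omega$ has a rank by assumption, and a full s.o.p.\ for $R$ is in particular a s.o.p.\ for $R$, which is all Corollary~\ref{free} asks for. The standing hypothesis here---that $\Tor_i(\omega/\bold x\omega,{}^{f^n}\!\! R)=0$ for some $i>0$ and some $n\ge\spp(R)$---is precisely condition $(4)$ of Corollary~\ref{free}. Therefore condition $(1)$ of that corollary holds, i.e., $\omega$ is free.

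Finally I would pass from ``$\omega$ free'' to ``$R$ Gorenstein'', exactly as in the corollary following Theorem~\ref{main1}. Localizing at a minimal prime $\i p$ of $R$, the module $\omega_{\i p}$ is the canonical module of the Artinian local ring $R_{\i p}$, so $\length(\omega_{\i p})=\length(R_{\i p})$; since $\omega$ is free this forces $\rk\omega=1$, hence $\omega\cong R$, and a Cohen--Macaulay local ring whose canonical module is free is Gorenstein. Since essentially all the work is already contained in Corollary~\ref{free}, I expect no real obstacle; the only points requiring care are the trivial $\dim R=0$ case and the routine check that the given $\bold x$ is an admissible choice in Corollary~\ref{free}.
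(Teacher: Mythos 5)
Your proof is correct and takes essentially the same route as the paper, which simply invokes Corollary~\ref{free} with $M=\omega$ and then passes from ``$\omega$ free'' to ``$R$ Gorenstein.'' You add one small point of care the paper glosses over: Corollary~\ref{free} carries the hypothesis $\dim R>0$, so your separate disposal of the Artinian case (where ``$\omega$ has a rank'' already forces $\omega$ free, hence $R$ Gorenstein, with no need of the $\Tor$ vanishing) tidies up a detail left implicit in the paper.
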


\begin{rmk}The author does not know whether $\Tor_i(\omega, {}^{f^n}\!\!
R)=0$ (for one $i>0$) implies $R$ is Gorenstein.
\end{rmk}

\specialsection*{ACKNOWLEDGEMENTS}
I would like to thank the referee for pointing out numerous minor errors in the earlier versions of this paper.

\bibliographystyle{amsalpha}

\end{document}